\numberwithin{equation}{section}
\theoremstyle{plain}
\newtheorem{thm}{Theorem}
\newtheorem{prop}{Proposition}[section]
\newtheorem{lem}[prop]{Lemma}
\newtheorem{corollary}[thm]{Corollary}
\theoremstyle{definition}
\theoremstyle{remark}
\newtheorem*{rem*}{Remark}
\newtheorem*{rems*}{Remarks}
\newtheorem{rem}[prop]{Remark}
\newcommand{\Z}{\mathbb{Z}}
\newcommand{\CP}{\mathcal{P}}
\newcommand{\bv}\boldsymbol{}
\newcommand{\eps}\varepsilon
\renewcommand{\leq}{\leqslant}
\renewcommand{\le}{\leqslant}
\renewcommand{\geq}{\geqslant}
\renewcommand{\ge}{\geqslant}
\renewcommand{\mod}[1]{\ (\mathrm{mod}\,#1)}
\title{On extreme values of $r_3(n)$ in arithmetic progressions}
\author{Michael Filaseta}
\address{Mathematics Department \\
University of South Carolina \\
Columbia, SC 29208}
\curraddr{}
\email{filaseta@math.sc.edu}
\author{Jonah Klein}
\address{Mathematics Department \\
University of South Carolina \\
Columbia, SC 29208}
\curraddr{}
\email{jonah.klein@sc.edu}
\author{Cihan Sabuncu}
\address{D\'epartement de math\'ematiques et de statistique\\
Universit\'e de Montr\'eal\\
CP 6128 succ. Centre-Ville\\
Montr\'eal, QC H3C 3J7\\
Canada}
\curraddr{}
\email{cihan.sabuncu@umontreal.ca}
\dedicatory{Dedicated to George Andrews and Bruce Berndt on their 85th birthdays}
\subjclass[2020]{Primary 11E25; Secondary 11E20, 11N37}
\begin{document}

\begin{abstract}
For a given integer \( m \) and any residue \( a \mod m \) that can be written as a sum of 3 squares modulo $m$, we show the existence of infinitely many integers \( n \equiv a \mod m \) such that the number of representations of $n$ as a sum of three squares, \( r_3(n) \), satisfies $r_3(n) \gg_m \sqrt{n} \log \log n $. Consequently, we establish that there are infinitely many integers \( n \equiv a \mod m \) for which the Hurwitz class number \( H(n) \) also satisfies \( H(n) \gg_m \sqrt{n} \log \log n \). 

\end{abstract}
\maketitle
\section{Introduction}
Writing positive integers as a sum of a fixed number of squares has a long and interesting history (cf.~\cite{grosswald}).  Of particular importance is the groundbreaking work of Ramanujan to this field. For an in-depth discussion of Ramanujan's contributions, see the recent paper by Berndt and Moree~\cite{berndtmoree}.
Our main focus in this paper will be on showing that, in any given arithmetic progression $a \mod{m}$ containing numbers which are sums of $3$ squares, there are $n$ for which the number of representations of $n$ as a sum of $3$ squares exceeds what one obtains on average by a factor of $\gg_{m} \log \log n$.  Another focus is to show that a certain approach of Chowla \cite{Chowla}, in the analogous problem where no arithmetic progression is specified, making use of counts on the number of representations of positive integers as a sum of $4$ squares, can be modified to give the corresponding result for arithmetic progressions.

For $n$ a positive integer, let
\[ r_3(n) := \big{|}\{ (a,b,c)\in \Z^3 : a^2+b^2+c^2=n \}\big{|}. \]
Legendre's Three-Square Theorem \cite{Legendre} asserts that $r_3(n)>0$ if and only if $n$ is not of the form $4^u(8v+7)$ for some non-negative integers $u$ and $v$. 

The problem of estimating the average number of representations of an integer as a sum of $3$ squares corresponds to counting lattice points in a sphere, from which we can see that
\[\sum_{0 \leq n \leq x} r_3(n) \sim \frac{4\pi}{3} x^{3/2}.\]
With this formula, we can deduce that there are infinitely many integers $n$ for which $r_3(n) \gg \sqrt{n}$. We refer to this bound as the \textit{trivial} bound. 

Chowla \cite{Chowla}, in the goal of finding large values of $L(1,\chi_{-4n})$, showed that there are infinitely many square-free integers $n$ congruent to $1 \mod 4$ such that
\[ r_3(n) \gg \sqrt{n} \log\log n.\]
The connection between $r_3(n)$ and $L(1,\chi_{-4n})$ comes from a classical result due to Gauss, who proved that\footnote{See \cite{Mortenson} for references and a recent proof of this result.}
\begin{equation}\label{Hurwitz_class}
r_3(n) = \begin{cases} 
      12H(n) & \text{if } n\equiv 1,2 \mod 4 \\
       24H(n) & \text{if } n\equiv 3 \mod 8\\
       r_3(n/4) & \text{if } n\equiv 0 \mod 4 \\
        0 & \text{if } n\equiv 7 \mod 8,
   \end{cases}
   \end{equation}
where $H(n)$ is the Hurwitz class number, which is defined for $-n=Df^2$ where $D$ is a negative fundamental discriminant as
\[ H(n)= \frac{\sqrt{|D|} \cdot L(1,\chi_D) }{\pi} \sum_{d \mid f} \mu(d) \chi_D(d) \sigma\bigg( \frac{f}{d}\bigg), \]
where $\chi_D = (\frac{D}{\cdot})$ is the Kronecker symbol, $\mu(n)$ is the M\"obius function, and $\sigma(n)$ is the sum of divisors function.  See also the work of Granville-Soundararajan \cite{GranSound}, where they study the distribution of values of $L(1,\chi_d)$ as $\chi_d$ ranges over primitive real characters and examine how often it attains the maximal value.

The average behavior of $r_3(n)$ is well understood when we restrict $n$ to belong to some arithmetic progression $a \mod m$. Similarly to the general case, we can show that
\[
\sum_{0 \leq n \leq x,n \equiv a \mod m} r_3(n) \sim \frac{4c_{a,m} \pi} {3m^3}x^{3/2},
\]
where $c_{a,m}$ is the number of solutions to $x_1^2+x_2^2+x_3^2 \equiv a \mod m$ in $\Z/m\Z$. It follows that whenever $c_{a,m} \neq 0$, there are infinitely many integers $n \equiv a \mod m$ such that $r_3(n) \gg c_{a,m}\sqrt{n}/m^2$.  If $c_{a,m} = 0$, then $r_{3}(n) = 0$ for all $n \equiv a \mod m$.  
Thus, the trivial bound also holds when $n$ is restricted to an arithmetic progression representing numbers which are sums of $3$ squares.

Relationship \eqref{Hurwitz_class}, together with the definition of $H(n)$, tells us that $r_3(n)$ is intimately related to $L(1,\chi_D)$. As such, finding extreme values of $L(1,\chi_D)$, where $D$ is a negative fundamental discriminant, would allow one to find similar extreme values for $r_3(n)$.  Along these lines, the work of Brink, Moree, and Osburn \cite[Theorem 3.1]{Brink}, following up on a result of Joshi \cite{Joshi}, implies that certain cases of integers $m > 0$ and $a$ with $(a,m)=1$ are such that there are infinitely many integers $n$ for which $n \equiv a \mod m$ and $r_3(n) \gg_{m} \sqrt{n}\log\log n$.   

In this paper, we obtain $r_3(n) \gg_{m} \sqrt{n}\log\log n$ only requiring that $a \mod{m}$ contains integers which are the sums of $3$ squares. The approach we use is different, and we demonstrate in fact that one may modify an idea of Chowla \cite{Chowla} to establish our main theorem.  A variation of the results of Brink, Moree, and Osburn \cite{Brink}, and Joshi \cite{Joshi}, would provide an alternative approach to obtaining a result similar to the one obtained here. 

\begin{thm}\label{mainresult}
    Let $m$ be a positive integer, and let $a$ be any integer that can be written as a sum of $3$ squares modulo $m$. Then there exist infinitely many integers $n\equiv a \mod m$ such that as $n\to \infty$, we have
    \[
    r_3(n) \gg_m \sqrt{n}\log\log n.
    \]
\end{thm}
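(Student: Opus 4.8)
The plan is to adapt Chowla's argument, whose engine is the classical four-square theorem: every positive integer is a sum of four squares, so $r_4(N) > 0$ for all $N$, and in fact $r_4(N) = 8\sigma(N)$ for $N$ odd. The idea is to write a suitable target value $N$ as $N = n + k^2$ where $n \equiv a \mod m$ lies in a range around $x$, and to choose $N$ so that $\sigma(N)$ — hence $r_4(N)$ — is large, of order $N \log\log N$. Then $\sum_{k} r_3(N - k^2)$ over the admissible $k$ equals $r_4(N)$ up to lower-order terms, and since the sum has $O(\sqrt{N})$ terms, one of the summands $r_3(n)$ with $n = N - k^2$ must be $\gg r_4(N)/\sqrt{N} \gg \sqrt{N}\log\log N \gg \sqrt{n}\log\log n$. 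The extra ingredient compared to Chowla is the congruence constraint $n \equiv a \mod m$: one must arrange both that $N$ has many small prime factors (to make $\sigma(N)$ large) and that the decomposition $N = n + k^2$ can be taken with $n \equiv a \mod m$.

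First I would fix, using the hypothesis that $a$ is a sum of three squares mod $m$, residues $b_1, b_2, b_3 \bmod m$ with $b_1^2 + b_2^2 + b_3^2 \equiv a \mod m$, and set $c \equiv b_1^2+b_2^2+b_3^2 \mod{m^2}$ or work modulo a convenient power of $m$; then choose $k$ in a fixed residue class mod $m$ (say $k \equiv 0$) so that $N - k^2 \equiv a \mod m$ forces $N \equiv a \mod m$. Next I would build $N$: take $N = P \cdot \ell$ where $P = \prod_{p \le y} p^{e_p}$ is a product of prime powers designed so that $\sigma(N)/N \gg \log\log N \gg \log y$ — this is where the $\log\log n$ gain comes from, via Mertens' theorem $\prod_{p \le y}(1 + 1/p + \cdots) \gg \log y$ — while simultaneously $N$ is forced into the correct residue class mod $m$ by an appropriate choice of the cofactor $\ell$ (using CRT; one needs $N$ coprime to $m$ or at least controlled at primes dividing $m$, which is harmless since we may absorb the $m$-part into a fixed factor). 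One should keep $N$ odd, or handle the power of $2$ separately, so that $r_4(N) = 8\sigma(N)$ applies cleanly.

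Then comes the counting identity: $r_4(N) = \sum_{k \in \Z,\, k^2 \le N} r_3(N - k^2)$, and restricting $k$ to the residue class mod $m$ chosen above gives a sub-sum $\sum_{k \equiv 0 \,(m),\, k^2 \le N} r_3(N-k^2)$, each term of which has $N - k^2 \equiv a \mod m$. The difficulty is that this sub-sum might be much smaller than $r_4(N)$ — the representations of $N$ as four squares need not have a fourth coordinate in the prescribed class. To handle this I would instead work modulo $m^2$ (or $4m^2$) and count representations $N = x_1^2+x_2^2+x_3^2+x_4^2$ weighted by the condition $x_4 \equiv k_0 \mod m$: the number of four-square representations of $N$ with $x_4$ in a fixed class mod $m$ is, for $N$ in a suitable class mod $m^2$, a positive proportion $\gg_m 1$ of $r_4(N)$ — this follows because $r_4$ is (up to the singular series) equidistributed over residues in a way one can extract from the explicit formula $r_4(N) = 8\sum_{d \mid N, 4 \nmid d} d$ together with a local computation at $m$, or alternatively from a theta-function / modular-forms argument splitting $\theta^4$ by the class of one variable. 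Granting this, the sub-sum is $\gg_m \sigma(N) \gg_m N\log\log N$, it has $\ll \sqrt{N}/m$ terms, so some $n = N - k^2 \equiv a \mod m$ has $r_3(n) \gg_m \sqrt{N}\log\log N$; since $n \asymp N$ for the relevant range of $k$ (or one simply takes $k$ small, e.g. $k \le \sqrt{N}/2$, forcing $n \gg N$), this gives $r_3(n) \gg_m \sqrt{n}\log\log n$. Letting $y \to \infty$ produces infinitely many such $n$.

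The main obstacle I expect is precisely the local equidistribution step — showing that restricting one of the four square-variables to a fixed residue class mod $m$ retains a positive proportion $\gg_m 1$ of the total count $r_4(N)$, uniformly as $N$ runs through the constructed sequence. This is a statement about the $m$-adic density of representations; it should come down to a clean computation with the singular series for four squares (which is benign since four squares has no obstruction) or, more robustly, to writing $r_3(N-k^2)$ summed over $k$ in an arithmetic progression as a combination of coefficients of $\theta(z)^3 \theta(mz \text{-type})$ modular forms and invoking nonnegativity of $r_3$ together with a lower bound for the "main term." An alternative that sidesteps part of this is to first reduce mod $m$ to the case $\gcd(a,m)=1$ on a thin set and quote the circle-method asymptotic for $\sum_{n \le x,\, n \equiv a (m)} r_3(n)$ already recalled in the introduction, but getting the $\log\log$ gain still requires the four-square input, so the equidistribution issue reappears in the guise of controlling $c_{a,m}$-type constants uniformly. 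I would therefore budget most of the effort for making the "positive proportion mod $m$" claim precise and uniform.
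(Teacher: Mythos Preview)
Your overall strategy matches the paper's exactly: build $N$ divisible by all small primes not dividing $m$ so that $r_4(N)\gg_m N\log\log N$ via Jacobi's formula, then peel off one square variable in a fixed residue class modulo $m$ and pigeonhole among the $O_m(\sqrt{N})$ values of $k$. You have also correctly located the crux: why should a \emph{prescribed} residue class for one coordinate capture a positive proportion of $r_4(N)$?

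Where you and the paper diverge is in how this crux is resolved. You propose to prove equidistribution of one coordinate modulo $m$ by analytic means (singular series for four squares, or splitting $\theta^4$ via modular forms). The paper instead sidesteps equidistribution entirely by an algebraic device. First, pigeonhole among the $m^4$ residue vectors $(x_1,x_2,x_3,x_4)\bmod m$ gives \emph{some} class carrying at least $r_4(N)/m^4$ representations; no uniformity is needed. Second, Euler's four-square identity (quaternion multiplication) is used to \emph{rotate} that class: one finds $(y_1,y_2,y_3,y_4)$ with $\sum y_j^2\equiv 1\pmod m$ and $\sum x_jy_j\equiv 0\pmod m$, so that the associated linear map sends every representation in the chosen class to a representation of $N\cdot\sum y_j^2$ whose first coordinate is $\equiv 0\pmod m$, and the map is injective since its determinant is $(\sum y_j^2)^2\not\equiv 0$. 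The analytic problem is thus replaced by the purely local one of solving those two simultaneous congruences in the $y_j$ for each prime power $p^e\,\|\,m$; this is what the bulk of the paper carries out (with a preliminary reduction to $a$ squarefree), using Hensel-type lifting at $p=2$ and a Cauchy--Davenport argument for odd $p$.

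Your route via equidistribution is plausible and may well succeed, but note that the explicit divisor-sum formula for $r_4$ does not by itself see the residue of a single coordinate, so you would genuinely need the theta-splitting or a Kloosterman-refined circle-method input; this is heavier machinery than the paper's quaternion trick, which keeps the argument completely elementary. If you pursue your version, the cleanest formulation is probably to show that for $N$ in a suitable class mod $m$ and coprime to $m$, the representations of $N$ as four squares equidistribute over $(\mathbb{Z}/m\mathbb{Z})^4$ on the quadric $\sum x_i^2\equiv N$, which follows from the fact that the relevant theta series differ from $\theta^4$ by a cusp form; but you should be aware that the paper achieves the same conclusion with no modular forms at all.
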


By the relationship \eqref{Hurwitz_class}, we also obtain lower bounds on the Hurwitz class number for infinitely many integers $n$ in arithmetic progressions. 

\begin{corollary}\label{maincor}
Let $m$ be a positive integer, and let $a$ be any integer not divisible by $4$ that can be written as a sum of 3 squares modulo $m$. Then there exist infinitely many integers $n\equiv a \mod m$ such that as $n\to \infty$, we have
\[
H(n) \gg_m \sqrt{n}\log\log n.
\]
\end{corollary}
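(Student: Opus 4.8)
The plan is to follow Chowla's strategy, adapted to the arithmetic-progression setting. The basic identity underlying Chowla's argument is that if $n = a^2 + b^2 + c^2 + d^2$, then grouping the four squares as $(a^2+b^2) + (c^2+d^2)$ and using that $r_2(k) \ll k^{o(1)}$ while the average of $r_2$ is constant, one can produce an integer of size $\asymp n$ represented as a sum of three squares many times, because a highly divisible number (one with many prime factors $\equiv 1 \bmod 4$) has large $r_2$. More precisely, I would start with a parameter $P = \prod_{p \le y,\, p \equiv 1 (4)} p$ (or a suitable smooth/highly-composite modulus), so that $r_2(P) \gg 2^{\pi(y;4,1)} \gg P^{c/\log\log P}$, equivalently $r_2(P) \gg \sqrt{P}\cdot(\log\log P)$ after the standard rescaling $n \asymp P \cdot (\text{something})$; the exact bookkeeping is routine and I will not grind through it. The novelty needed here is to force $n$ into the residue class $a \bmod m$.

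\textbf{Key steps, in order.} First, fix the hypothesis that $a$ is a sum of three squares mod $m$: write $a \equiv \alpha_1^2 + \alpha_2^2 + \alpha_3^2 \pmod m$. Second, choose the highly-composite auxiliary integer: let $y$ be a large parameter, set $Q = \prod_{p \le y,\ p \equiv 1 (4)} p$, and note $r_2(Q) \gg \exp\!\big(c\,y/\log y\big)$ while $Q = \exp\!\big((1+o(1)) y/2\big)$, so $r_2(Q) \gg Q^{\,c'/\log\log Q}$ — but for the target bound $\sqrt n \log\log n$ I actually want $r_2(Q)/\sqrt{Q}$ of size $\asymp \log\log Q$, which forces the standard choice where $y$ is taken so that $Q$ has roughly $\log\log Q$ many prime factors; I would set this threshold precisely at the end. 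Third, and this is the arithmetic-progression input: I want to build $n$ of the form $n = m^2 Q k + (\alpha_1^2+\alpha_2^2+\alpha_3^2)$ or, better, to use the classical fact (Gauss/Dirichlet) that $Q$ itself, being a product of primes $\equiv 1 \bmod 4$, is a sum of two squares in $\gg r_2(Q)$ essentially distinct ways, and then write $n = x^2 + (y_1^2 + y_2^2)$ where $y_1^2 + y_2^2$ ranges over the many representations of $M \cdot Q$ for a suitable multiplier $M$, while $x$ is chosen in a fixed residue class mod $m$ to pin down $n \bmod m$. The cleanest route: pick $b$ with $b^2 \equiv a - (\text{something}) \bmod m$ using the three-squares-mod-$m$ hypothesis, and run the construction inside the progression by replacing $Q$ with $m^2 Q$, so every representation $m^2 Q = u^2 + v^2$ gives $(mu)^2 + (mv)^2 \equiv 0 \bmod m^2$, hence $n := (mu)^2 + (mv)^2 + w^2 \equiv w^2 \bmod{m^2}$; choosing $w$ appropriately (again via the mod-$m$ hypothesis, lifted to mod $m^2$ by Hensel-type adjustment, or by absorbing a constant) puts $n \equiv a \bmod m$ while $r_3(n) \ge r_2(m^2 Q) \gg r_2(Q) \gg \sqrt{Q}\,\log\log Q \gg_m \sqrt{n}\,\log\log n$, the last step because $n \asymp_m Q$. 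Fourth, verify infinitude: letting $y \to \infty$ produces infinitely many such $Q$, hence infinitely many $n$; one checks the resulting $n$ are distinct and tend to infinity.

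\textbf{Main obstacle.} The delicate point is not the size estimate for $r_2$ of a highly composite number — that is classical — but rather ensuring simultaneously that (i) $n$ lands in the prescribed class $a \bmod m$, (ii) the many two-square representations of the auxiliary modulus survive when we multiply by $m^2$ and add a square, i.e. that $r_3(n) \gg r_2(Q)$ with the implied constant depending only on $m$ and not collapsing, and (iii) $n$ stays comparable to $Q$ so that $\sqrt n \log\log n$ is the right order. Point (ii) requires care: the representations $m^2 Q = u^2+v^2$ are in bijection with those of $Q$, so that is fine, but I must make sure adding the fixed square $w^2$ (with $w$ bounded in terms of $m$) does not merge distinct triples — it cannot, since the first two coordinates already differ — and that $w^2 < m^2 Q$ so $n \le 2m^2 Q$. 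Point (i) is where the hypothesis "$a$ is a sum of three squares mod $m$" is essential and exactly sufficient: without it, $r_3$ vanishes identically on the class, and with it, the congruence $n \equiv a \bmod m$ can be arranged by solving a single quadratic congruence for $w$, possibly after replacing $m$ by $m^2$ or by an $O(m^2)$ adjustment of the additive constant, which only changes implied constants. I expect the write-up to spend most of its effort making this congruence bookkeeping airtight while citing standard results for the analytic density of smooth/highly-composite integers and for the lower bound on $r_2$.
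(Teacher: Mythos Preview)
Your proposal has a fatal quantitative error. You base the argument on $r_2$: you take $Q$ to be a product of primes $p\equiv 1\pmod 4$, set $n=m^2Q+w^2$, and claim $r_3(n)\ge r_2(m^2Q)\gg \sqrt{Q}\,\log\log Q$. But $r_2(k)\ll_\eps k^\eps$ for every $\eps>0$; in fact for squarefree $Q$ with $\omega(Q)$ prime factors all $\equiv 1\pmod 4$ one has $r_2(Q)=4\cdot 2^{\omega(Q)}$, while already the product of the first $k$ such primes exceeds $4^k$ for $k\ge 2$. There is \emph{no} choice of $Q$ making $r_2(Q)/\sqrt{Q}$ bounded below, let alone growing like $\log\log Q$, so the ``standard choice of $y$'' you defer to the end does not exist. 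Chowla's actual mechanism, and the paper's, uses $r_4$: by Jacobi's theorem $r_4(N)=8\sigma_*(N)$, and for $N$ a product of the primes up to $z$ (avoiding those dividing $m$) one gets $r_4(N)\gg_m N\log\log N$. One then writes $r_4(N)=\sum_{k}r_3(N-k^2)$ and pigeonholes over the $O(\sqrt{N})$ values of $k$; the arithmetic-progression constraint is imposed not by multiplying by $m^2$ but by restricting $k$ to a residue class and first twisting by a quaternionic rotation (Euler's four-square identity) so that one coordinate of every four-square representation becomes $\equiv 0\pmod m$. This last step---solving $y_1^2+y_2^2+y_3^2+y_4^2\equiv 1$ and $x_1y_1+\cdots+x_4y_4\equiv 0\pmod m$ simultaneously---is where the real work lies.

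Even if the $r_2$ bound were granted, your congruence bookkeeping breaks. Your construction gives $n=m^2Q+w^2\equiv w^2\pmod m$, so you can only hit residue classes $a$ that are \emph{squares} modulo $m$, whereas the hypothesis is merely that $a$ is a sum of three squares modulo $m$ (e.g.\ $a=3$, $m=7$: here $3=1^2+1^2+1^2$ but $3$ is not a square mod $7$). The ``Hensel-type adjustment'' you allude to cannot repair this, because the obstruction is already at the level of $m$, not $m^2$. Finally, the statement you are proving is about the Hurwitz class number $H(n)$, not $r_3(n)$; the paper handles this by first establishing the $r_3$ bound (Theorem~\ref{mainresult}) and then invoking Gauss's relation $r_3(n)=12H(n)$ or $24H(n)$ for $n\not\equiv 0\pmod 4$, with a short extra argument to ensure $4\nmid n$ when $\nu_2(m)\le 1$.
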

\begin{rem}
    If we could force $n$ to be square-free, we would also get lower bounds on $L(1,\chi_{-n})$ where $n\equiv a \mod m$. 
\end{rem}

The paper is organized as follows. In Section \ref{overview}, we go over the general methodology that we will use to obtain Theorem \ref{mainresult}. In Section \ref{construction}, we show how to find solutions to $2$ needed simultaneous congruences in $4$ variables modulo a prime power to complete the proof of Theorem \ref{mainresult}. In Section \ref{corollaryproofs}, we show how to obtain Corollary \ref{maincor} from Theorem \ref{mainresult}. 

\section{Overview of the method}\label{overview}

Fix a positive integer $m$, and let
\[m=\prod_{i=1}^{\ell} p_i^{\nu_i}\]
be its unique factorization into primes. 
Fix some $a \in \{ 0,1, \ldots, m-1 \}$ such that $a$ is a sum of three squares modulo $m$.  Let $z$ be large, and set
\[
N_{1} = \prod_{p \in \CP} p,
\]
where $\CP$ is the set of primes $\le z$ excluding the primes $p_i$ for $1 \le i \le \ell$.  
Let $d \in \{ 0,1, \ldots, m-1 \}$ be the inverse of $N_{1}$ modulo $m$.
Let $N := a d N_{1}$.  Then $N \equiv a \pmod{m}$.
We take $z$ to be sufficiently large compared to $m$, noting, in particular, the largest prime $p\leq z$ satisfies $p \sim \log N_1 \sim \log N$ as $z$ (or $N$) tends to $\infty$.
Define
\[r_4(n)=\big{|}\{(k_1,k_2,k_3,k_4) \in \Z^4: k_1^2+k_2^2+k_3^2+k_4^2=n\}\big{|}.\]
By Jacobi's Four-Square Theorem, we have that
\[
r_4(n) = 8 \sigma_{*}(n),
\qquad \text{where }
\sigma_{*}(n) := \sum_{\substack{d \mid n \\ 4 \nmid d}} d.
\]
Thus, 
\begin{align*}
r_4(N) \gg_{m} N\prod_{p \in \CP}(1+1/p) \gg_m N\log\log N.
\end{align*}
Let 
\[E=\{(x_1,x_2,x_3,x_4) \in \{ 1, 2, \ldots, m\}^4: x_1^2+x_2^2+x_3^2+x_4^2 \equiv a \mod m\},\]
and for $(x_1,x_2,x_3,x_4) \in E$, let 
\[r_4(N,x_1,x_2,x_3,x_4)=\{(k_1,k_2,k_3,k_4) \in \Z^4: k_1^2+k_2^2+k_3^2+k_4^2=N,k_i \equiv x_i \mod m\}.\]
Since 
\[r_4(N)=\sum_{(x_1,x_2,x_3,x_4)\in E} r_4(N,x_1,x_2,x_3,x_4),\]
there must be some $(x_1,x_2,x_3,x_4)\in E$ for which $r_4(N,x_1,x_2,x_3,x_4) \geq r_4(N)/m^4$. Recall Euler's four-square identity
\begin{align*}
(y_{1}^{2} + y_{2}^{2} &+ y_{3}^{2} + y_{4}^{2})(x_{1}^{2} + x_{2}^{2} + x_{3}^{2} + x_{4}^{2}) \\[5pt]
&= (x_{1}y_{1} + x_{2}y_{2} + x_{3}y_{3} + x_{4}y_{4})^{2} 
+ (-x_{1}y_{2} + x_{2}y_{1} - x_{3}y_{4} + x_{4}y_{3})^{2} \\[3pt]
&\qquad + (-x_{1}y_{3} + x_{2}y_{4} + x_{3}y_{1} - x_{4}y_{2})^{2} 
+ (-x_{1}y_{4} - x_{2}y_{3} + x_{3}y_{2} + x_{4}y_{1})^{2}.
\end{align*}
Suppose that we can find integers $y_j \in [0,m)$ such that
\begin{equation}\label{simeq1}
y_{1}^{2} + y_{2}^{2} + y_{3}^{2} + y_{4}^{2} \equiv 1 {\hskip -6pt}\pmod{m}
\end{equation}
and
\begin{equation}\label{simeq2}
x_{1} y_{1} + x_{2} y_{2} + x_{3} y_{3} + x_{4} y_{4} \equiv 0 {\hskip -6pt}\pmod{m},
\end{equation}
given fixed integers $x_{j}$ satisfying
\begin{equation}\label{startupeq0}
x_{1}^{2} + x_{2}^{2} + x_{3}^{2} + x_{4}^{2} \equiv a {\hskip -6pt}\pmod{m}.
\end{equation}
Let $S:\Z^4 \to \Z^4$ be given by 
\[S(x_1,x_2,x_3,x_4)=\begin{pmatrix}
y_1 & y_2 & y_3 & y_4\\
-y_2 & y_1 & -y_4 & y_3\\
-y_3 & y_4 & y_1 & -y_2\\
-y_4 & -y_3 & y_2 & y_1
\end{pmatrix}
\begin{pmatrix}
x_1 \\
x_2 \\
x_3\\
x_4
\end{pmatrix}.\]

The determinant of the above matrix is $(y_1^2+y_2^2+y_3^2+y_4^2)^2$. Recalling \eqref{simeq1}, we know that this is non-zero. It follows that $S$ is injective, and so we have $\geq r_4(N)/m^4$ representations of $N(y_1^2+y_2^2+y_3^2+y_4^2)$ as a sum of four squares, the first of which is $0 \mod m$. 

It will follow that 
\begin{equation}\label{Chowlatrick}
\sum_{\substack{|k| \leq \sqrt{N(y_1^2+y_2^2+y_3^2+y_4^2)} \\ k\equiv 0 \mod m}}r_3(N(y_1^2+y_2^2+y_3^2+y_4^2)-k^2) \geq r_4(N)/m^4,
\end{equation}
and that there must be some integer $n \leq N(y_1^2+y_2^2+y_3^2+y_4^2)$ for which we have 
\[
r_3(n) \geq \dfrac{r_4(N)}{(2\sqrt{N(y_1^2+y_2^2+y_3^2+y_4^2)}+1)\,m^3}
\gg_{m} \sqrt{N}\,\log\log N
\] 
and such an integer $n$ is congruent to $a \mod m$ by construction. 
Thus, Theorem \ref{mainresult} will follow provided we we can  choose $y_1,y_2,y_3,y_4 \in \{0,1,\ldots,m-1\}$ that satisfy equations \eqref{simeq1} and \eqref{simeq2}

\section{Contruction of the solution}\label{construction}

Our goal in this section is to show that we can find $y_1,y_2,y_3,y_4$ that satisfy equations \eqref{simeq1} and \eqref{simeq2}. We start by showing that we can make a few simplifications. The first simplification we make is that we may suppose that $a$ is square-free.  A somewhat obvious approach to this is to write $a=d_1d_2^2$, with $d_1$ square-free, and then to replace finding representations of integers in the class $a \mod{m}$ as the sum of $3$ squares with finding representations of integers in the class $d_{1} \mod{m}$ as the sum of $3$ squares.  For each such representation $x_{1}^{2} + x_{2}^{2} + x_{3}^{2}$ of integers in the class $d_{1} \mod{m}$, we can then consider $(d_{2}x_{1})^{2} + (d_{2}x_{2})^{2} + (d_{2}x_{3})^{2}$ in the class $a \mod{m}$.  The difficulty with this idea is illustrated by the example $a = 28$ and $m = 40$, so $d_{1} = 7$ and $d_{2} = 2$.  There are no integers in the class $7 \mod{40}$ which can be written as a sum of $3$ squares, so we will not be able to show there are many representations of integers in the class $28 \mod{40}$ as a sum of $3$ squares by first demonstrating such representations exist in the class $7 \mod{40}$.  And note that $68 = 6^{2} + 4^{2} + 4^{2}$, so the class $28 \mod{40}$ should have integers with many representations as a sum of $3$ squares.  

\begin{lem}\label{squarefreelemma}
Let $m$ be a positive integer, and let $0 \leq a < m$ be an integer.  Suppose that $a$ is a sum of $3$ squares modulo $m$.   
Then we can find a square-free integer $a' \in [1,5m]$ such that $a'$ is a sum of $3$ squares modulo $m$ and if there are infinitely many $n' \equiv a' \mod{m}$ such that $r_3(n') \gg_m \sqrt{n'}\log\log n'$, then there are infinitely many integers $n$ such that $n \equiv a \mod m$ and $r_3(n) \gg_m \sqrt{n}\log\log n$. 
\end{lem}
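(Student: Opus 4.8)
The core difficulty, as the discussion before the lemma makes clear, is that the naïve reduction $a = d_1 d_2^2 \mapsto d_1$ can fail: the class $d_1 \bmod m$ need not represent any sum of $3$ squares even though $a \bmod m$ does (the example $a=28$, $m=40$). So I would not try to pull $a$ down to its squarefree part; instead I would look for a squarefree $a'$ lying in the *same* residue class structure but possibly much larger — the bound $a' \in [1,5m]$ in the statement signals that we are allowed to replace $a$ by $a + km$ for a small multiplier $k$. The plan is therefore: (i) show that some integer $a'$ with $a' \equiv a \pmod m$, $1 \le a' \le 5m$, is squarefree; (ii) observe that such an $a'$ is automatically a sum of $3$ squares modulo $m$ (trivially, since $a' \equiv a$); and (iii) note the implication on $n'$ is immediate because $n' \equiv a' \equiv a \pmod m$ means the $n'$ we produce already lie in the class $a \bmod m$, so we may simply take $n = n'$.

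For step (i), the point is to find a squarefree number in the short interval $[a, a+5m]$ (or we can go up to $[a+m, a+5m]$ to be safe) lying in the progression $a \bmod m$; equivalently, writing $a' = a + km$ with $1 \le k \le 4$ (or a similarly small range), we need some $a+km$ to be squarefree. This is a sieve statement: the number of $k \le K$ with $a+km$ squarefree is $K \prod_{p \nmid m}(1 - p^{-2}) \prod_{p \mid m}(\text{local factor}) + O(\sqrt{Km} \cdot \text{something})$, and one checks the main term is positive and of size $\asymp K$ for $K$ an absolute constant like $4$ or $5$, **provided** the congruence $a \equiv 0 \pmod{p^2}$ for primes $p \mid m$ does not obstruct squarefreeness — but if $p^2 \mid a$ and $p^2 \mid m$ then every $a + km$ is divisible by $p^2$, which would be fatal. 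I expect this to be the genuine obstacle: one must first handle the primes $p$ with $p^2 \mid \gcd(a,m)$ separately. The way around it: if $p^2 \mid (a,m)$, replace $m$ by $m/p$ (or adjust $a$ modulo the $p$-part) — more carefully, one reduces to the case where, for every prime $p$, $p^2 \nmid \gcd(a,m)$, by peeling off such prime-power factors from $m$, using that $r_3(n) \gg \sqrt n \log\log n$ for $n$ in a *finer* progression is a stronger statement that still implies the original. Once that reduction is in place, for each prime $p \mid m$ with $p \| a$ or $p \nmid a$ one shows $a + km$ can avoid being divisible by $p^2$ for a positive proportion of $k$, and for $p \nmid m$ the usual squarefree sieve applies; combining via CRT and choosing $K$ (hence the constant $5$) large enough to beat the error term gives a squarefree $a' \le 5m$.

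The remaining steps are routine: squarefreeness of $a'$ plus $a' \equiv a \pmod m$ gives "$a'$ is a sum of $3$ squares mod $m$" for free; and the transfer of the lower bound is trivial since the hypothesised $n'$ already satisfy $n' \equiv a \pmod m$ and $\sqrt{n'}\log\log n' = \sqrt{n}\log\log n$ with $n=n'$. So essentially all the content is in the short-interval squarefree argument and, preceding it, the reduction that eliminates the bad primes $p$ with $p^2 \mid (a,m)$; the constant $5m$ is whatever falls out of making the sieve error term smaller than the main term for that bounded range of multipliers. I would present the bad-prime reduction first, then a one-paragraph sieve (or even an elementary inclusion–exclusion over the $O(\log m)$ relevant primes, which suffices since the interval has length $O(m)$), then close with the two trivial observations.
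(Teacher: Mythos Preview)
Your plan has a genuine gap: the sieve step cannot produce a squarefree $a'\equiv a\pmod m$ inside $[1,5m]$. A squarefree sieve gives only an asymptotic density; with a bounded number of terms the error term swamps the main term, and no absolute constant in place of $5$ helps. Concretely, for any prime $m>4\cdot 9\cdot 25\cdot 49\cdot 121$ one can pick $a\in[1,m)$ by the Chinese Remainder Theorem so that $4\mid a$, $9\mid a+m$, $25\mid a+2m$, $49\mid a+3m$, $121\mid a+4m$; then $\gcd(a,m)=1$ and (since $m$ is an odd prime) $a$ is a sum of three squares modulo $m$, yet none of the five integers congruent to $a$ modulo $m$ in $[1,5m]$ is squarefree. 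Your bad-prime reduction is also muddled: passing to $m/p$ gives a \emph{coarser} progression, not a finer one, so it weakens the conclusion; and if you instead divide both $a$ and $m$ by $p^{2}$ and transfer back via $n=p^{2}n'$, your step (ii) collapses, because the $a'$ you locate is only congruent to $a/p^{2}$ modulo $m/p^{2}$, not to $a$ modulo $m$, and need not be a sum of three squares modulo the original $m$ (for instance $a'=7$ in the example $a=28$, $m=40$ is not).

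The paper does not try to keep $a'$ in the class $a\bmod m$. Instead it produces a squarefree $a'$ with $a\equiv a'd^{2}\pmod m$ for some integer $d\le m$, and transfers via $n=d^{2}n'$, using $r_3(d^{2}n')\ge r_3(n')$. The work is a case analysis on $\nu_2(a)$ and $\nu_2(m)$: in each case one passes from $a$ to a specific $a+km$ with $k$ bounded and takes $a'$ to be the squarefree kernel of its odd part, the cases being arranged (via Legendre's theorem) so that $a'\not\equiv 7\pmod 8$ and hence $a'$ is an integer sum of three squares. The bound $a'\le 5m$ falls out of this explicit construction, not from a sieve.
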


\begin{proof}
We may and do suppose instead that $a \in [1,m]$ (in other words, we replace $a$ with $m$ in the case that $a = 0$).  
Let $r$ and $s$ be the nonnegative integers for which $2^{r} \,\Vert\, a$ and $2^{s} \,\Vert\, m$, and write $a = 2^{r} a_{0}$ and $m = 2^{s} m_{0}$ with $a_{0}$ and $m_{0}$ odd integers.  
If $r$ is odd, then we can write $a=d_1d_2^2$, with $d_1$ square-free and $2 \,\Vert\, d_{1}$.  
In this case, we see, by Legendre's Three-Square Theorem, that $d_{1}$ can be written as a sum of $3$ squares.  We take $a' = d_{1}$ and note $a' \in [1,m]$ and that $a$ can be written as $a'$ times a square modulo $m$.   

Now, suppose $r$ is even.
Consider the case that $s \le r$.  Then $a + 2^{r-s+1} m = 2^{r} (a_{0} + 2 m_{0})$ and $a + 2^{r-s+2} m = 2^{r} (a_{0} + 4 m_{0})$ are such that the odd factors $a_{0} + 2 m_{0}$ and $a_{0} + 4 m_{0}$ are distinct modulo $4$.  As a consequence, $a_{0} + 2 \ell m_{0}$ is $1$ modulo $4$ for some $\ell \in \{ 1,2 \}$.  We write $a_{0} + 2 \ell m_{0} = d'_{1} (d'_{2})^{2}$ with $d'_1$ odd and square-free and $d'_{2}$ odd.  Then $(d'_{2})^{2} \equiv 1 \pmod{4}$ so $d'_{1} \equiv 1 \pmod{4}$.  Then Legendre's Three-Square Theorem implies that $d'_{1}$ can be written as a sum of $3$ squares.  Observe that 
\[
d'_{1} \le a_{0} + 2 \ell  m_{0} \le 5m 
\qquad \text{and} \qquad
a \equiv 2^{r} (a_{0} + 2 \ell  m_{0}) \equiv d'_{1} (d'_{2} 2^{r/2})^{2} \mod{m}.  
\]
Taking $a' = d'_{1}$, then we see that $a' \in [1,5m]$ and again $a$ can be written as $a'$ times a square modulo $m$.   

Next, suppose $r$ is even and $s \in \{r+1, r+2 \}$.  Then instead we consider $a + m = 2^{r} (a_{0} + 2^{s-r} m_{0})$ and $a + 2m = 2^{r} (a_{0} + 2^{s-r+1} m_{0})$.  Here, the odd factors $a_{0} + 2^{s-r} m_{0}$ and 
$a_{0} + 2^{s-r+1} m_{0}$ are distinct modulo $8$ so one of these two odd numbers is not $7$ modulo $8$.  Taking this number to be $a_{0} + 2 \ell m_{0}$, where $\ell \in \{ 1, 2, 4 \}$, then we write it as $d'_{1} (d'_{2})^{2}$ and take $a' = d'_{1}$ as before.  In this case, similar to the above, we have $a'$ can be written as a sum of $3$ squares, 
\[
d'_{1} \le a_{0} + 2 \ell  m_{0} \le m + \ell m \le 5m
\qquad \text{and} \qquad
a \equiv 2^{r} (a_{0} + 2 \ell  m_{0}) \equiv d'_{1} (d'_{2} 2^{r/2})^{2} \mod{m}.  
\]
Thus, again, $a' \in [1,5m]$ and $a$ can be written as $a'$ times a square modulo $m$.   

Next, suppose $r$ is even and $s \ge r+3$.
We claim in this case, we can arrive at a similar value of $a'$ by writing $a=d_1d_2^2$ with $d_{1}$ square-free and odd and taking $a' = d_{1} \in [1,m]$.  In other words, we claim that this choice of $a'$ is a sum of $3$ squares modulo $m$ and $a$ is $a'$ times a square modulo $m$.  That $a$ is $a'$ times a square modulo $m$ follows from $a=d_1d_2^2$.  Assume $a' = d_{1}$ is not the sum of $3$ squares modulo $m$.  Then $d_{1}$ cannot be a sum of $3$ squares over $\mathbb Z$.  As $d_{1}$ is odd, this implies by Legendre's Three-Square Theorem that $d_{1} \equiv 7 \pmod{8}$.  Note that $2^{r+3}$ divides $m$.  By the conditions of Lemma~\ref{squarefreelemma}, there are integers $x_{1}, x_{2}$ and $x_{3}$ such that
\[
d_1d_2^2 = a \equiv x_{1}^{2} + x_{2}^{2} + x_{3}^{2} \pmod{2^{r+3}}.
\]
Since $r$ is even, we have $d_{2} = 2^{r/2} d'_{2}$, where $d'_{2}$ is odd and therefore has an inverse modulo $2^{r+3}$.  We deduce that
\begin{equation}\label{eqimpossible}
2^{r} d_{1} \equiv (x'_{1})^{2} + (x'_{2})^{2} + (x'_{3})^{2} \pmod{2^{r+3}},
\end{equation}
for some integers $x'_{1}, x'_{2}$ and $x'_{3}$.  This is impossible since $d_{1} \equiv 7 \pmod{8}$.  More precisely, if $r = 0$, then a congruence argument modulo $8$ shows \eqref{eqimpossible} does not hold; and if $r > 0$, then $r \ge 2$ and an argument modulo $4$ shows each $x'_{j}$ is even and we can divide through by $4$ (adjusting the squares on the right of \eqref{eqimpossible} and replacing $r$ with $r-2$ on the left of \eqref{eqimpossible} and in the modulus) and apply Fermat's method of descent.  Thus, we have a contradiction, and $a'$ is a sum of $3$ squares modulo $m$, and furthermore $a$ is $a'$ times a square modulo $m$.

Now that we have constructed $a'$ with $a \equiv a' d^{2} \mod{m}$ for some integer $d$ which we take to be in $[1,m]$, suppose we have $n \in \mathbb Z$ large, compared to $m$, for which $n \equiv a' \mod m$ and $r_3(n) \gg_{m} \sqrt{n}\log\log n$.  Notice that if $n=x_1^2+x_2^2+x_3^2$, then
\[
n d^{2} = (d x_1)^2+(d x_2)^2+(d x_3)^2.
\]
Hence, we see that
\[
r_3(n d^{2}) \geq r_3(n) \gg_m \sqrt{n}\,\log\log n 
\gg_m \sqrt{n m^2} \,\log\log (n m^{2})
\gg_m \sqrt{n d^2} \,\log\log (n d^{2}).
\]
Since $n d^{2} \equiv a' d^{2} \equiv a \mod m$, the lemma follows. 
\end{proof}

We now suppose, which we can do by Lemma~\ref{squarefreelemma}, that $a$ is square-free and $a \in [1,5m]$.  Next, we show that we may work with a certain multiple of $m$ instead of working with $m$.  The idea is that if $n \equiv a \mod {tm}$ for some positive integer $t$, then $n \equiv a \mod m$.  Thus, as long as $t$ is bounded as a function of $m$, then it suffices to show that $r_{3}(n) \gg_{m} \sqrt{n} \log\log n$ for infinitely many $n \equiv a \mod {tm}$.  But one has to be selective in the choice of $t$.  If $a = 7$ and $n = 20$, for example, then there are $n \equiv a \mod {m}$ which can be written as a sum of $3$ squares, like $27 = 3^{2} + 3^{2} + 3^{2}$, but every $n \equiv a \mod {2m}$ is not a sum of $3$ squares since such $n$ are $7$ modulo $8$.  So we cannot hope in this case to prove $r_{3}(n) \gg_{m} \sqrt{n} \log\log n$ for infinitely many $n \equiv a \mod {m}$ by restricting to $n \equiv a \mod {2m}$.  With this in mind, we define $t$ by
\[
tm = 7 \,\prod_{p \mid a} p^{\max\{2,\nu_{p}(m)\}} \cdot \prod_{\substack{p \mid m \\ p \nmid a}} p^{\nu_{p}(m)},
\]
where $\nu_{p}(m)$ denotes the exponent in the largest power of $p$ dividing $m$.  The purpose of the factor $7$ is simply to ensure $t \ge 7$ so that $a \le 5m < tm$.  Recall that $a$ is square-free and, hence, is $1$, $2$ or $3$ modulo $4$. 
If $\nu_{2}(tm) \le 2$, then $a$ and $a + 2^{2-\nu_{2}(tm)} tm$ are distinct modulo $8$ but the same and non-zero modulo $4$.  Legendre's Three-Square Theorem implies, in this case, that there are $n \equiv a \mod{tm}$ that are sums of $3$ squares.
If $\nu_{2}(tm) \ge 3$, then the definition of $t$ above implies $m$ is divisible by $8$.
Since $a$ is square-free, if $a \not\equiv 7 \mod{8}$, then again Legendre's Three-Square Theorem implies that there are $n \equiv a \mod{tm}$ that are sums of $3$ squares.  If $a \equiv 7 \mod{8}$, then $a$ is not a sum of $3$ squares and furthermore $a$ is not a sum of $3$ squares modulo $m$.  In this case, Theorem~\ref{mainresult} does not apply.  Thus, we can replace $m$ by $tm$, and the condition in Theorem~
\ref{mainresult} that $a$ can be written as a sum of $3$ squares modulo $m$ will still apply with $m$ replaced by $tm$.  As a consequence of this replacement and the definition of $t$, we may and do suppose now that the square of any prime divisor of the square-free integer $a$ divides $m$ and also that $a \in [1,m)$.  

To clarify in advance the purpose of the above set-up, we have the congruence \eqref{startupeq0}.  
With the above set-up, we can deduce that, for each prime divisor $p$ of $a$, some $x_{j}$ satisfies $p \nmid x_{j}$, as it is impossible for $p^{2}$ to divide both $m$ and the left-hand side of the congruence \eqref{startupeq0} but not $a$.
Furthermore, if $p \mid m$ and $p \nmid a$, then \eqref{startupeq0} implies $p \nmid x_{j}$ for some $j$.  
Thus, in general, for each prime $p$ dividing $m$, there must be some $x_{j}$ not divisible by $p$.

Given Section~\ref{overview}, our goal will be to apply the Chinese Remainder Theorem after showing that for a fixed $p^{e}$, with $p$ a prime and $e \ge 1$, we can find integers $y_{j}$ such that
\begin{equation}\label{simeq3}
y_{1}^{2} + y_{2}^{2} + y_{3}^{2} + y_{4}^{2} \equiv 1 {\hskip -6pt}\pmod{p^{e}}
\end{equation}
and
\begin{equation}\label{simeq4}
x_{1} y_{1} + x_{2} y_{2} + x_{3} y_{3} + x_{4} y_{4} \equiv 0 {\hskip -6pt}\pmod{p^{e}},
\end{equation}
given integers $x_{j}$ satisfying
\begin{equation}\label{startupeq1}
x_{1}^{2} + x_{2}^{2} + x_{3}^{2} + x_{4}^{2} \equiv a {\hskip -6pt}\pmod{p^{e}},
\end{equation}
where $a$ is square-free, some $x_{j}$ is not divisible by $p$, and $e \ge 2$ if $p \mid a$.

We begin by addressing the case that $p = 2$.
Here, $4 \nmid a$ and some $x_{j}$ is odd.
If $a$ is odd, then \eqref{startupeq1} implies that some $x_{j}$ is even.
If $a$ is even, then $e \ge 2$.  
Since $4 \nmid a$ and \eqref{startupeq1} holds, we see again that some $x_{j}$ is even.
Recall that each $x_{j}$ is in the interval $[1,m]$ and, hence, non-zero.
For $j \in \{ 1, 2, 3, 4 \}$, define $r_{j} \in \mathbb Z^{+} \cup \{ 0 \}$ by $2^{r_{j}} \,\Vert\, x_{j}$, 
and write $x_{j} = 2^{r_{j}} x'_{j}$ where $x'_{j} \in \mathbb Z$.
Using the symmetry in \eqref{simeq3}, \eqref{simeq4} and \eqref{startupeq1}, we take 
\[
0 = r_{1} \le r_{2} \le r_{3} \le r_{4}
\qquad \text{and} \qquad
r_{4} \ge 1.
\]
Let $\ell = \lfloor e/2 \rfloor$ so that $e \in \{ 2\ell, 2\ell+1 \}$.
Our goal for $p = 2$ is to show there exist integers $y_{j}$ satisfying \eqref{simeq3} and \eqref{simeq4} and such that $2^{r_{4}} \,\Vert\, y_{1}$, $y_{2} = 0$ and $y_{4}$ is odd.

For $1 \le e \le r_{4}+1$, we take $y_{1} = 2^{r_{4}} \ge 2$, $y_{2} = y_{3} = 0$, and $y_{4} = 1$.  
One checks directly that \eqref{simeq3} and \eqref{simeq4} hold with $p = 2$.  Here, we use that two integers which are divisible by $2^{r_{4}}$ and not by $2^{r_{4}+1}$ have a sum divisible by $2^{r_{4}+1}$.  

We finish the proof by induction on $e$.  For $p = 2$ and for some $e \ge r_{4}+1$, suppose we know that there exist integers $y_{j}$ satisfying \eqref{simeq3} and \eqref{simeq4} and such that $2^{r_{4}} \,\Vert\, y_{1}$, $y_{2} = 0$ and $y_{4}$ is odd.  Fix such $y_{j}$. 
Observe that this implies 
\begin{equation}\label{simeq5}
y_{1}^{2} + y_{2}^{2} + y_{3}^{2} + y_{4}^{2} \equiv u {\hskip -6pt}\pmod{2^{e+1}},
\quad \text{where } u \in \{ 1, 2^{e}+1 \},
\end{equation}
and
\begin{equation}\label{simeq6}
x_{1} y_{1} + x_{2} y_{2} + x_{3} y_{3} + x_{4} y_{4} \equiv v {\hskip -6pt}\pmod{2^{e+1}},
\quad \text{where } v \in \{ 0, 2^{e} \}.
\end{equation}
We make the following further observations.
\begin{itemize}
\item
If $u = 1$ and $v = 0$, then we are done; there exist $y_{j}$ satisfying \eqref{simeq3} and \eqref{simeq4} with $p^{e}$ replaced by $2^{e+1}$ as we want.  
\item
If $u = 1$ and $v = 2^{e}$, then we change $y_{1}$ to $y_{1} + 2^{e}$ to obtain the necessary $y_{j}$ satisfying \eqref{simeq3} and \eqref{simeq4} with $p^{e}$ replaced by $2^{e+1}$.
\item
If $u = 2^{e}+1$, $v = 0$ and $r_{4} \ge 2$ or if $u = 2^{e}+1$, $v = 2^{e}$ and $r_{4} = 1$, then we change instead $y_{4}$ to $y_{4} + 2^{e-1}$ to obtain the necessary $y_{j}$ satisfying \eqref{simeq3} and \eqref{simeq4} with $p^{e}$ replaced by $2^{e+1}$.
\item
If $u = 2^{e}+1$, $v = 0$ and $r_{4} = 1$ or if $u = 2^{e}+1$, $v = 2^{e}$ and $r_{4} \ge 2$, then we change instead $y_{1}$ to $y_{1} + 2^{e}$ and $y_{4}$ to $y_{4} + 2^{e-1}$ to obtain the necessary $y_{j}$ satisfying \eqref{simeq3} and \eqref{simeq4} with $p^{e}$ replaced by $2^{e+1}$.
\end{itemize}
\noindent
Thus, in any case, we see that there exist integers $y_{j}$ satisfying \eqref{simeq3} and \eqref{simeq4} and such that $2^{r_{4}} \,\Vert\, y_{1}$, $y_{2} = 0$ and $y_{4}$ is odd with $p^{e}$ replaced by $2^{e+1}$, completing the induction and establishing that there exist integers $y_{j}$ satisfying \eqref{simeq3} and \eqref{simeq4} with $p^{e}$ replaced by any power of $2$.  

For odd primes, we will make use of the following lemma.

\begin{lem}\label{newlemma3.1shortened}
Let $p$ be an odd prime, and let $e \in \mathbb Z^{+}$.
Let $a'$, $b'$ and $d$ be integers which are coprime to $p$.  
Let $f(x_{1},x_{2}) = a' x_{1}^{2} + b' x_{2}^{2}$.  
Then $f(x_{1},x_{2}) \equiv d  {\hskip -2pt}\pmod{p^{e}}$ has a solution.
\end{lem}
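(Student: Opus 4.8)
The plan is to reduce the claim to a counting argument about the number of values represented by $a'x_1^2$ and by $d - b'x_2^2$ modulo $p^e$, and then to invoke a pigeonhole/box principle on sizes. First I would treat the case $e = 1$ directly: the set $\{a'x_1^2 \bmod p : x_1 \in \mathbb{Z}/p\mathbb{Z}\}$ has exactly $(p+1)/2$ elements (the nonzero squares scaled by the unit $a'$, together with $0$), and likewise $\{d - b'x_2^2 \bmod p\}$ has exactly $(p+1)/2$ elements. Two subsets of $\mathbb{Z}/p\mathbb{Z}$ of sizes summing to $p+1 > p$ must intersect, which gives a solution $(x_1, x_2)$ to $a'x_1^2 = d - b'x_2^2$, i.e. $f(x_1,x_2) \equiv d \pmod p$. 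This is the classical Cauchy–Davenport-flavoured argument and is entirely elementary.

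Next I would lift from $p$ to $p^e$ by Hensel's lemma. Having a solution $(x_1^{(0)}, x_2^{(0)})$ mod $p$, I claim we may choose it so that the lift is nonsingular. The partial derivatives of $f$ are $2a'x_1$ and $2b'x_2$; since $p$ is odd and $a', b'$ are units, the gradient vanishes mod $p$ only when $x_1 \equiv x_2 \equiv 0$, which forces $d \equiv 0 \pmod p$, contrary to hypothesis. Hence at \emph{every} solution mod $p$ the gradient is nonzero, so Hensel's lemma applies (lifting the coordinate whose partial derivative is a unit, keeping the other fixed) and produces a solution mod $p^e$. One small point to handle: if it happens that the only solutions mod $p$ have, say, $x_1 \equiv 0$ but $x_2 \not\equiv 0$, we lift in the $x_2$ variable; the argument above guarantees at least one coordinate is always liftable, so there is no obstruction.

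An alternative to the Hensel step, which I would mention as a remark, is to run the same box-principle argument directly over $\mathbb{Z}/p^e\mathbb{Z}$: the image of $x \mapsto a'x^2$ on units of $\mathbb{Z}/p^e\mathbb{Z}$ has size $\tfrac12\varphi(p^e)$, and after adding in the multiples of $p^2, p^4, \ldots$ one gets an image of size strictly larger than $p^e/2$ for each of $a'x_1^2$ and $d - b'x_2^2$, forcing an intersection. This avoids Hensel but requires a slightly more careful count; the Hensel route is cleaner, so I would present that as the main proof.

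I do not expect any genuine obstacle here — the statement is a standard fact. The only thing requiring a moment's care is making sure the nonsingular-lift hypothesis is met, i.e.\ confirming that the coprimality of $d$ to $p$ rules out the degenerate solution $x_1 \equiv x_2 \equiv 0$; once that is noted, Hensel finishes immediately and uniformly in $e$.
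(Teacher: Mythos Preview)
Your argument is correct. The base case $e=1$ is handled the same way in spirit as the paper (the paper phrases the pigeonhole step as an application of the Cauchy--Davenport inequality to the sumset $a'Q + b'Q$, but the content is identical to your two-sets-of-size-$(p+1)/2$ count). The genuine difference is in the passage from $e=1$ to general $e$. You lift via Hensel's lemma, using that $\gcd(d,p)=1$ forces some coordinate of a mod-$p$ solution to be a unit, so the corresponding partial derivative $2a'x_1$ or $2b'x_2$ is a unit and the lift goes through. The paper instead exploits the homogeneity of $f$: having found, mod $p$, representations of both a nonzero square and a nonsquare, it notes these values are automatically units mod $p^e$ and hence powers of a primitive root $g$, and then uses $f(gx,gy)=g^2 f(x,y)$ to translate the exponent by $2$ and thereby hit every unit class mod $p^e$. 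Your Hensel route is the more portable one (it would survive adding lower-order terms to $f$); the paper's multiplicative trick is slightly slicker here because it sidesteps any lifting machinery and uses only the group structure of $(\mathbb{Z}/p^e\mathbb{Z})^\times$ together with the degree-$2$ homogeneity.
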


\begin{proof}
We start with $e = 1$ and apply the Cauchy-Davenport inequality \cite[Theorem 5.4]{TaoVu}.  In particular, if $Q$ is the set of $(p+1)/2$ squares modulo $p$ and $N$ is the union of the set of non-squares modulo $p$ and $\{ 0 \}$, which is also of size $(p+1)/2$, then the values of $f(x_{1},x_{2}) = a' x_{1}^{2} + b' x_{2}^{2}$ modulo $p$ are the same as the elements in one of the sets $Q + Q$, $Q + N$ or $N + N$ modulo $p$, depending on whether $a'$ and $b'$ are quadratic residues or not.  Thus, the Cauchy-Davenport inequality implies that every residue class modulo $p$ can be represented as a value of $f$.  In particular, there are integers $x_{0}$, $y_{0}$, $x'_{0}$ and $y'_{0}$ such that $f(x_{0},y_{0})$ is a non-zero square modulo $p$ and $f(x'_{0},y'_{0})$ is a non-square modulo $p$ (necessarily, also non-zero modulo $p$). 
Fix such $x_{0}$, $y_{0}$, $x'_{0}$ and $y'_{0}$.

Now, consider more generally $e \ge 1$.  Note that a non-zero integer modulo $p$ is a square modulo $p$ if and only if it is a square modulo $p^{e}$.  Let $g$ be a primitive root modulo $p^{e}$.  We deduce that there are nonnegative integers $k$ and $\ell$ such that 
\[
f(x_{0},y_{0}) \equiv g^{2k} {\hskip -6pt}\pmod{p^{e}} 
\qquad \text{and} \qquad
f(x'_{0},y'_{0}) \equiv g^{2\ell+1} {\hskip -6pt}\pmod{p^{e}}.
\]  
Since $f(gx,gy) = g^{2} f(x,y)$, by multiplying both sides of these congruences by $g^{2}$ a number of times, we see that each square and non-square modulo $p^{e}$, not divisible by $p$, is a value of $f$ modulo $p^{e}$.  Since $d$ is relatively prime to $p$, we deduce the number $d$ is a value of $f$ modulo $p^{e}$.
\end{proof}

Recall that some $x_{j}$ is not divisible by $p$. 
By the symmetry of the congruences \eqref{simeq3}, \eqref{simeq4} and \eqref{startupeq1}, we may and do suppose that $p \nmid x_{1}$.  Fix $j \in \{ 2, 3, 4 \}$, and set $y_{j} = 0$.  As $x_{1}$ is invertible modulo $p^{e}$, we can solve for $y_{1}$ in \eqref{simeq4}.  Substituting this choice of $y_{j}$ and $y_{1}$ into \eqref{simeq3} and simplifying, we obtain for $j \in \{ 2, 3, 4 \}$, respectively, that  
\begin{gather}
(x_{1}^{2} + x_{3}^{2}) y_{3}^{2} + 2 x_{3} x_{4} y_{3} y_{4} + (x_{1}^{2} + x_{4}^{2}) y_{4}^{2} \equiv x_{1}^{2} {\hskip -6pt}\pmod{p^{e}} \label{y2zero} \\[5pt]
(x_{1}^{2} + x_{2}^{2}) y_{2}^{2} + 2 x_{2} x_{4} y_{2} y_{4} + (x_{1}^{2} + x_{4}^{2}) y_{4}^{2} \equiv x_{1}^{2} {\hskip -6pt}\pmod{p^{e}} \label{y3zero} \\[5pt]
(x_{1}^{2} + x_{2}^{2}) y_{2}^{2} + 2 x_{2} x_{3} y_{2} y_{3} + (x_{1}^{2} + x_{3}^{2}) y_{3}^{2} \equiv x_{1}^{2} {\hskip -6pt}\pmod{p^{e}}. \label{y4zero}
\end{gather}
We will be content if we can solve for the $y_{i}$'s in any one of these congruences because then we can back-track and determine the remaining $y_{i}$, namely $y_{1}$ will come from \eqref{simeq4} and the last remaining $y_{i}$ will be $0$.

To apply Lemma~\ref{newlemma3.1shortened}, we simply want to complete a square.  
First, suppose that $p \nmid (x_{1}^{2}+x_{3}^{2})$.  Then we can rewrite \eqref{y2zero} as
\begin{equation}\label{y2zeroresult}
(x_{1}^{2} + x_{3}^{2}) \bigg( y_{3} + \dfrac{x_{3} x_{4}}{x_{1}^{2} + x_{3}^{2}} y_{4} \bigg)^2 
+ \bigg( - \dfrac{x_{3}^{2} x_{4}^{2}}{x_{1}^{2} + x_{3}^{2}} + x_{1}^{2} + x_{4}^{2} \bigg) y_{4}^{2} \equiv x_{1}^{2} {\hskip -4pt}\pmod{p^{e}}.
\end{equation}
By \eqref{startupeq1}, we have
\[
- \dfrac{x_{3}^{2} x_{4}^{2}}{x_{1}^{2} + x_{3}^{2}} + x_{1}^{2} + x_{4}^{2} 
= \dfrac{x_{1}^{2} (x_{1}^{2} + x_{3}^{2} + x_{4}^{2})}{x_{1}^{2} + x_{3}^{2}}
\equiv \dfrac{x_{1}^{2} (a - x_{2}^{2})}{x_{1}^{2} + x_{3}^{2}}  {\hskip -4pt}\pmod{p^{e}}.
\]
Thus, if also $p \nmid (a - x_{2}^{2})$, then Lemma~\ref{newlemma3.1shortened} implies that we can solve \eqref{y2zeroresult} for $y_{4}$ modulo $p^{e}$ and then $y_{3}$, followed by $y_{2}$ and $y_{1}$.
Since $p \nmid (x_{1}^{2}+x_{3}^{2})$, we also could rewrite \eqref{y4zero} as
\begin{align*}
(x_{1}^{2} + x_{3}^{2}) \bigg( y_{3} + \dfrac{x_{2} x_{3}}{x_{1}^{2} + x_{3}^{2}} y_{2} \bigg)^2 
+ \bigg( - \dfrac{x_{2}^{2} x_{3}^{2}}{x_{1}^{2} + x_{3}^{2}} + x_{1}^{2} + x_{2}^{2} \bigg) y_{2}^{2} \equiv x_{1}^{2} {\hskip -4pt}\pmod{p^{e}},
\end{align*}
where
\[
- \dfrac{x_{2}^{2} x_{3}^{2}}{x_{1}^{2} + x_{3}^{2}} + x_{1}^{2} + x_{2}^{2} 
= \dfrac{x_{1}^{2} (x_{1}^{2} + x_{2}^{2} + x_{3}^{2})}{x_{1}^{2} + x_{3}^{2}}
\equiv \dfrac{x_{1}^{2} (a - x_{4}^{2})}{x_{1}^{2} + x_{3}^{2}}  {\hskip -4pt}\pmod{p^{e}},
\]
and we can solve for the $y_{j}$ provided $p \nmid (a - x_{4}^{2})$.
Thus, we are done in the case that $p \nmid (x_{1}^{2}+x_{3}^{2})$ unless
\begin{equation}\label{x2x4congs}
x_{2}^{2} \equiv x_{4}^{2} \equiv a {\hskip -6pt}\pmod{p}.
\end{equation}
So suppose \eqref{x2x4congs} holds.  
From \eqref{startupeq1}, we deduce
\[
x_{1}^{2} + x_{3}^{2} \equiv -a {\hskip -6pt}\pmod{p}.
\]
Since $p \nmid (x_{1}^{2}+x_{3}^{2})$, we deduce that $p \nmid a$.
From \eqref{x2x4congs}, we see now that $p \nmid x_{2}$ and $p \nmid x_{4}$.  
As \eqref{x2x4congs} implies $x_{2}^{2} + x_{4}^{2} \equiv 2a {\hskip -2pt}\pmod{p}$ and $p$ is odd, we also see that $p \nmid (x_{2}^{2} + x_{4}^{2})$.

Still under the condition $p \nmid (x_{1}^{2}+x_{3}^{2})$, we approach solving for the $y_{j}$ in \eqref{simeq3} and \eqref{simeq4} a different way.  Since $p \nmid (x_{1} x_{4})$, for every choice of integers $r$ and $s$, we can set
\[
y_{1} \equiv \dfrac{-rx_{3}}{x_{1}} {\hskip -7pt}\pmod{p^{e}}, \quad
y_{2} \equiv s {\hskip -7pt}\pmod{p^{e}}, \quad
y_{3} \equiv r {\hskip -7pt}\pmod{p^{e}}, \quad
y_{4} \equiv \dfrac{-s x_{2}}{x_{4}} {\hskip -7pt}\pmod{p^{e}}
\]
to ensure that \eqref{simeq4} holds.  
Substituting these into \eqref{simeq3} and simplifying, we obtain
\[
x_{4}^{2} (x_{1}^{2}+x_{3}^{2}) r^{2} + x_{1}^{2} (x_{2}^{2}+x_{4}^{2}) s^{2} \equiv x_{1}^{2}x_{4}^{2} {\hskip -6pt}\pmod{p^{e}}.
\]
Since $p$ does not divide the coefficients of $r^{2}$ and $s^{2}$ as well as $x_{1}^{2}x_{4}^{2}$ in this congruence, Lemma~\ref{newlemma3.1shortened} implies that we can solve for $r$ and $s$ modulo $p^{e}$.  These choices for $r$ and $s$ then provide us values for the $y_{j}$ such that \eqref{simeq3} and \eqref{simeq4} hold.  

The above holds provided $p \nmid (x_{1}^{2}+x_{3}^{2})$.  Thus, using $p \nmid x_{1}$, we are able to show that we can find $y_{j}$ satisfying \eqref{simeq3} and \eqref{simeq4} unless $p \mid (x_{1}^{2}+x_{3}^{2})$.  By the symmetry of \eqref{simeq3}, \eqref{simeq4} and \eqref{startupeq1}, we deduce that we can find $y_{j}$ satisfying \eqref{simeq3} and \eqref{simeq4} unless $p \mid (x_{1}^{2}+x_{2}^{2})$ and $p \mid (x_{1}^{2}+x_{4}^{2})$.  Furthermore, since $p \nmid x_{1}$, from $p \mid (x_{1}^{2}+x_{3}^{2})$, we deduce $p \nmid x_{3}$.  Similarly, $p \nmid x_{2}$ and $p \nmid x_{4}$.  We can now use the symmetry of \eqref{simeq3}, \eqref{simeq4} and \eqref{startupeq1} again to deduce $p \mid (x_{i}^{2}+x_{j}^{2})$ for all choices of $i$ and $j$ in $\{ 1,2,3,4 \}$ with $i \ne j$.  However, this is impossible as then
\[
(x_{1}^{2}+x_{2}^{2}) + (x_{1}^{2}+x_{3}^{2}) - (x_{2}^{2}+x_{3}^{2}) = 2 x_{1}^{2}
\]
is divisible by $p$, since the left-hand side is, but $p$ does not divide $2 x_{1}^{2}$ since $p$ is odd and $p \nmid x_{1}$.  We deduce in fact that one of $p \nmid (x_{1}^{2}+x_{2}^{2})$, $p \nmid (x_{1}^{2}+x_{3}^{2})$ and $p \nmid (x_{2}^{2}+x_{3}^{2})$ must hold, allowing us to find $y_{j}$ satisfying \eqref{simeq3} and \eqref{simeq4}.  

\section{Proof of Corollary \ref{maincor}}\label{corollaryproofs}
Suppose first that $\nu_{2}(m) \le 1$.  Let $a \in \{ 0, 1, \ldots, m-1 \}$. 
We have $\nu_{2}(a+im) \le 1$ for some $i \in \{ 0,1 \}$.  
Then the numbers $a+im$ and $a+im+(4m/2^{\nu_{2}(m)})$ are the same modulo $4$, not divisible by $4$, and incongruent modulo $8$.
By Legendre’s Three-Square Theorem, at least one of these, say $b$, can be expressed as a sum of $3$ squares over $\mathbb Z$ and, hence, $b$ can be expressed as a sum of $3$ squares modulo $4m$.  
By Theorem~\ref{mainresult}, there are infinitely many $n$ such that $n \equiv b \mod {4m}$ and $r_3(n) \gg_m \sqrt{n} \log\log n$.  Note that $b$ is not divisible by $4$, so $n$ is not divisible by $4$.  Also, $n \equiv b \equiv a \mod{m}$ and $r_{3}(n) > 0$.  Thus, by relationship \eqref{Hurwitz_class}, we deduce again that $H(n) \gg_m \sqrt{n}\log\log n$ for infinitely many $n \equiv a \pmod{m}$.

Now, suppose that $m$ is divisible by $4$.  Let $a$ be an integer not divisible by $4$. Suppose that $a$ is representable as a sum of $3$ squares modulo $m$. Let $n$ be given by Theorem \ref{mainresult} so that $n \equiv a \mod m$ and $r_3(n) \gg_m \sqrt{n}\log\log n$. Because $4 \nmid a$, we know that $n \not\equiv 0 \mod 4$. Furthermore, we know that $r_3(n) >0$. By relationship \eqref{Hurwitz_class}, it follows that $H(n) \gg_{m} \sqrt{n} \log\log n$.

\section*{Acknowledgements}
The second author would like to thank Tapas Bhowmik for bringing this problem to his attention. The authors would like to thank Andrew Granville, Roger Heath-Brown, Frank Thorne, Ognian Trifonov, and Wei-Lun Tsai for helpful conversations. 

\section*{Funding}
Jonah Klein is funded by a scholarship from the Natural Sciences and Engineering Research Council of Canada (NSERC). 

\bibliographystyle{plain}

\begin{thebibliography}{99}

\bibitem{berndtmoree}
B.~C.~Berndt and P.~Moree, {\it Sums of two squares and the tau-function: Ramanujan's trail}, 	arXiv:2409.03428.

\bibitem{Brink}
D.~Brink, P.~Moree and R.~Osburn, 
{\it Principal forms $X^2 + nY^2$ representing many integers}, 
Abh.~Math.~Sem. Univ.~Hambg.~81 (2011), 129--139.

\bibitem{Chowla}
S.~Chowla,
{\it On the k-analogue of a result in the theory of the Riemann zeta function}, Mathematische Zeitschrift 38 (1934), 483--487.

\bibitem{GranSound}
A. Granville, K. Soundararajan, 
{\it The distribution of values of L(1, $\chi_d$ ).}
Geom. Funct. Anal. 13 (2003): 992–1028 

\bibitem{grosswald}
E.~Grosswald, 
Representations of integers as sums of squares,
Springer-Verlag, New York, 1985.

\bibitem{Joshi}
P.~Joshi, 
{\it The size of $L(1, \chi)$ for real nonprincipal residue characters $\chi$ with prime modulus}, J. Number
Theory 2 (1970), 58--73.

\bibitem{Legendre}
A.-M.~Legendre, Essai sur la théorie des nombres, Paris, An VI (1797–1798),  pp. 398--399.

\bibitem{Mortenson}
E.~T.~Mortenson, 
{\it A {K}ronecker-type identity and the representations of a number as a sum of three squares},
Bull. Lond. Math. Soc.~49 (2017), 770--783.

\bibitem{TaoVu}
T.~Tao and V.~Vu, 
Additive Combinatorics, 
Cambridge University Press, Cambridge, 2006.


\end{thebibliography}

\end{document}